\newtheorem{theorem}{Theorem}[section]
\newtheorem*{theoremA}{Theorem A}
\newtheorem*{theoremB}{Theorem B}
\newtheorem{lemma}[theorem]{Lemma}
\theoremstyle{definition}
\newtheorem{defn}[theorem]{Definition}
\newtheorem{rem}[theorem]{Remark}
\newtheorem{example}[theorem]{Example}
\DeclareMathOperator{\id}{id}
\DeclareMathOperator{\map}{Map}
\DeclareMathOperator{\Top}{Top}
\DeclareMathOperator{\Ob}{Ob}
\DeclareMathOperator*{\colim}{colim}
\DeclareMathOperator*{\holim}{holim}
\DeclareMathOperator{\hofib}{hof}
\DeclareMathOperator{\hoc}{cof}
\DeclareMathOperator{\conn}{Conn}
\begin{document}
\begin{center}\LARGE{Homotopy limits of spaces and connectivity}
\end{center}

\begin{center}\large{Emanuele Dotto}

\end{center}
\vspace{.3cm}

\begin{quote}
\textsc{Abstract}. This paper contains two results on how homotopy limits of topological spaces interact with connectivity. The first is a formula for the connectivity of the homotopy limit of diagrams shaped over suitably finite categories, in terms of the connectivity of the spaces in the diagram. The second result shows that the homotopy fiber of the restriction map induced on homotopy limits by a functor of indexing categories is itself a homotopy limit, indexed over a Grothendieck construction.
\end{quote}

\section*{Introduction}

It is well known that the homotopy colimit of a diagram of pointed $n$-connected spaces is itself $n$-connected. However, calculations of the connectivity of homotopy limits do not seem to be present in the literature. The first result of this paper is the following formula, expressing the connectivity of the homotopy limit of a diagram of pointed spaces in terms of the connectivities of the spaces in the diagram.

\begin{theoremA}
Let $X\colon I\rightarrow \Top_\ast$ be a diagram of pointed spaces. If the nerves of the over categories $NI/_i$ are finite dimensional simplicial sets, the homotopy limit $\holim_I X$ is at least
\[\min_{i\in \Ob I}\big(\conn X_i-\dim NI/_i\big)\]
connected.
\end{theoremA}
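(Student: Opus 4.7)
The plan is to use the Bousfield--Kan cosimplicial replacement $Y^\bullet$ with $Y^n = \prod_{\sigma \in N_n I} X_{\sigma(n)}$. Since every object of $\Top_\ast$ is fibrant, $Y^\bullet$ is Reedy fibrant, so the Tot-tower
\[\cdots \to \mathrm{Tot}_n Y^\bullet \to \mathrm{Tot}_{n-1} Y^\bullet \to \cdots \to \mathrm{Tot}_0 Y^\bullet = \prod_{i \in \Ob I} X_i\]
is a tower of Serre fibrations converging to $\mathrm{Tot}\, Y^\bullet \simeq \holim_I X$, and the homotopy fiber of $\mathrm{Tot}_n Y^\bullet \to \mathrm{Tot}_{n-1} Y^\bullet$ is $\Omega^n N^n Y^\bullet$, where $N^n Y^\bullet = \prod_{\sigma \text{ non-degenerate}} X_{\sigma(n)}$ is indexed by non-degenerate $n$-simplices of $NI$ and valued at their final vertex.

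The combinatorial core of the argument is the identity
\[\dim NI/_i \;=\; \max\bigl\{n \geq 0 : NI \text{ has a non-degenerate $n$-simplex ending at } i\bigr\}.\]
Any non-degenerate chain $j_0 \to \cdots \to j_n = i$ in $I$ yields a non-degenerate $n$-simplex $(j_0 \to \cdots \to j_n, \id_i)$ of $NI/_i$ with identity structure map, proving the inequality $\geq$. Conversely, a non-degenerate $k$-simplex of $NI/_i$ consists of a non-degenerate $k$-chain in $I$ together with a morphism from its endpoint to $i$; if this morphism is non-identity, appending it yields a non-degenerate $(k+1)$-chain of $NI$ ending at $i$, which by the previous step produces a non-degenerate $(k+1)$-simplex of $NI/_i$. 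At top dimension the structure map must therefore be the identity, and maximal-dimensional non-degenerate simplices of $NI/_i$ correspond to maximal-length non-degenerate chains of $NI$ ending at $i$.

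Writing $d_i := \dim NI/_i$ and $m := \min_i(\conn X_i - d_i)$, this identity ensures $N^n Y^\bullet$ is a product over $\{i : d_i \geq n\}$ of copies of $X_i$, so $\Omega^n N^n Y^\bullet$ has connectivity at least $\min_{i : d_i \geq n}(\conn X_i - n) \geq m$. The base $\mathrm{Tot}_0 Y^\bullet = \prod_i X_i$ is $\min_i \conn X_i$-connected, hence also $m$-connected because each $d_i \geq 0$. Induction on $n$ via the long exact sequence of the fibration $\mathrm{Tot}_n Y^\bullet \to \mathrm{Tot}_{n-1} Y^\bullet$ then shows each $\mathrm{Tot}_n Y^\bullet$ is $m$-connected, and the vanishing of $\pi_m$ of each fiber forces the transition maps $\pi_{m+1} \mathrm{Tot}_n Y^\bullet \to \pi_{m+1} \mathrm{Tot}_{n-1} Y^\bullet$ to be surjective, securing the Mittag--Leffler condition. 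The Milnor short exact sequence for $\holim_I X \simeq \lim_n \mathrm{Tot}_n Y^\bullet$ then yields $\pi_k \holim_I X = 0$ for $k \leq m$.

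The combinatorial identification of $\dim NI/_i$ with the maximal chain-length in $NI$ ending at $i$ is the main obstacle; once in place, the rest is bookkeeping with the Tot-tower, and the Mittag--Leffler argument neutralises any potential contribution from Milnor's $\lim^1$-term when $\sup_i d_i$ is infinite.
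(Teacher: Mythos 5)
Your proof is correct, but it takes a genuinely different route from the paper's. The paper works directly with the Bousfield--Kan model $\hom(BI/_-,X)$ and solves the extension problem $K\wedge S^k\rightarrow X$, $K\wedge D^{k+1}\dashrightarrow X$ by induction on the degree filtration of $I$ given by $\deg i=\dim NI/_i$ (a directed Reedy structure): at each object the obstruction to extending over a relative $(n+1)$-cell lies in $\pi_nX_i$ with $n+1\leq \dim K_i+k+1\leq \conn X_i+1$, so it vanishes. You instead filter the homotopy limit by the Tot-tower of the cosimplicial replacement and bound the connectivity of the layers $\Omega^nN^nY^\bullet$. Both arguments hinge on the same combinatorial fact --- a non-degenerate $n$-simplex of $NI$ with final vertex $i$ forces $\dim NI/_i\geq n$ --- which in the paper appears as the dimension count $\dim K_i=\dim NI/_i$ and in your proof as the identification of the normalized factors; note that only this easy inequality (not the full equality you prove) is needed for your estimate. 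Your route additionally requires the Reedy fibrancy of the cosimplicial replacement, the identification of the fibers of $\mathrm{Tot}_n\rightarrow\mathrm{Tot}_{n-1}$, and the Mittag--Leffler/$\lim^1$ step, which you handle correctly and which is genuinely necessary since $\sup_i\dim NI/_i$ may be infinite even when each $NI/_i$ is finite dimensional; the paper's cell-by-cell extension argument is more elementary and self-contained, while yours is the natural argument from the point of view of the homotopy spectral sequence of the Tot-tower. The only point deserving a little more care in your write-up is the low-degree end ($\pi_0$ and $\pi_1$) of the long exact sequences and of the Milnor sequence, where one is working with pointed sets rather than groups; this does not affect the conclusion.
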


The second result is a refinement of the classical homotopy cofinality theorem for homotopy limits, and it gives a description of the homotopy fiber of the restriction map on homotopy limits over suitable basepoints. It has been pointed out to the author by an anonymous reviewer that it is a direct consequence of the Fubini's theorem for homotopy limits of \cite{voj}. Here we give a direct and explicit proof of this theorem in the special case of homotopy cofibers.
We use the space of natural transformations
\[\holim_{I}X=\hom(BI/_{-},X)\]
of Bousfield-Kan as a model for the homotopy limit, where $BI/_{-}\colon I\rightarrow\Top$ is the diagram that associates to an object $i$ the classifying space of the over category $I/_i$. If $F\colon I\rightarrow J$ is a functor of small categories and $X\colon J\rightarrow \Top$ is a diagram of spaces, restriction along $F$ defines a map
\[F^\ast\colon \holim_JX\longrightarrow \holim_IF^\ast X\]
where $F^\ast X$ is the composite functor $X\circ F$. The classical cofinality theorem states that if the over categories $F/_j$ are contractible for every object of $J$, the restriction map $F^\ast$ is a weak equivalence (see e.g. \cite{BK},\cite{Hirsch}). Here we describe the homotopy fiber of $F^\ast$ over a constant natural transformation in $\holim_IF^\ast X$. A natural transformation $\ast\rightarrow F^\ast X$, induces an extension $\overline{X}\colon\hoc F\rightarrow \Top$ of $X$ to the homotopy cofiber of $F$. Here $\hoc F$ is a category defined as a Grothendieck construction (see \ref{grot}), and it fits into a sequence $I\stackrel{F}{\rightarrow}J\rightarrow \hoc F$
whose nerve is a model for the mapping cone of the nerve of $F$.

\begin{theoremB}[{\cite[31.5]{voj}}]
Let $F\colon I\rightarrow J$ be a functor, $X\colon J\rightarrow\Top$ a diagram of spaces and $\ast\rightarrow F^{\ast}X$ a natural transformation. There is a homotopy cartesian square of spaces
\[\xymatrix{\displaystyle\holim_{\hoc F}\overline{X}\ar[r]^{\iota^\ast}\ar[d]&\displaystyle\holim_JX\ar[d]^{F^{\ast}}\\
\ast\ar[r]&\displaystyle\holim_IF^{\ast}X
}\]
where the bottom horizontal map is the constant natural transformation $BI/_{-}\rightarrow\ast\rightarrow F^{\ast}X$. 
\end{theoremB}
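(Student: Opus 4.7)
The plan is to construct an explicit weak equivalence from $\holim_{\hoc F}\overline{X}$ to the homotopy fiber of $F^\ast$ over the constant natural transformation, working throughout with the Bousfield--Kan model $\holim_L Y=\hom(BL/_-,Y)$.

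A point of $\holim_{\hoc F}\overline X$ is a natural transformation $\eta\colon B(\hoc F)/_-\to\overline X$. The canonical inclusion $\iota\colon J\to\hoc F$ from the Grothendieck construction induces for each $j\in J$ a map $BJ/_j\to B(\hoc F)/_j$; restricting $\eta$ along these produces $\phi=\iota^\ast\eta\in\holim_J X$, which is the top horizontal map. The remaining components of $\eta$ lie on the cone-side objects of $\hoc F$. Since $\overline X$ is constant at the basepoint $\ast$ on those objects, and since the structure maps of $\hoc F$ between cone-side and $J$-side objects directly record the chosen null-transformation $\ast\to F^\ast X$, those remaining components assemble into a natural null-homotopy of $F^\ast\phi$ in $\holim_I F^\ast X$. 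This simultaneously exhibits commutativity of the square (up to the chosen null-homotopy) and produces a canonical comparison map from $\holim_{\hoc F}\overline X$ into the homotopy fiber of $F^\ast$.

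To show this comparison is a weak equivalence, I compute, for each $k\in\hoc F$, the over-category $(\hoc F)/_k$ and its classifying space from the explicit Grothendieck-construction presentation of $\hoc F$ in Section~\ref{grot}. This is an object-wise refinement of the global identification
\[
B(\hoc F)\ \simeq\ BJ\cup_{BI}CBI
\]
of the classifying space of $\hoc F$ with the mapping cone of $BF$: the nerve of $(\hoc F)/_k$ splits into simplices lying entirely in the $J/_-$ part and simplices passing through cone-side objects, and the latter assemble into a cone glued onto the $J$-part along the $F$-relevant attachment. Mapping into $\overline X$ then converts these pushouts into pullbacks at the level of natural-transformation spaces, and matches the cone-side component of $\eta$ with a natural null-homotopy indexed over $I$ of $F^\ast\phi$, yielding the equivalence with the homotopy fiber.

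The main obstacle is the over-category computation in the previous paragraph. Unlike the proof by the Fubini theorem cited from \cite{voj}, this direct approach requires careful tracking of which simplices in $N(\hoc F)/_k$ are cone-side, and an identification of the cone-attachment subspaces so that they match natural-transformation data indexed over $I$ rather than over $J$. Once that is carried out, the remaining steps---turning pushouts of classifying spaces into pullbacks under $\hom(-,X_k)$ and assembling the identifications naturally---are formal.
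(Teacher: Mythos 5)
Your outline is the paper's in spirit---identify the homotopy fiber of $F^\ast$ with natural transformations out of a diagram whose value at $i$ is a cone on $BI/_i$ attached to $BJ/_{F(i)}$, and compare that diagram with $B(\hoc F)/_-$---but the two steps you defer are exactly where the content sits, and as stated each has a gap. First, $N(\hoc F)/_i$ does \emph{not} literally split into ``simplices in the $J/_-$ part'' and ``a cone glued on along the $F$-relevant attachment.'' In $\hoc F$ the objects of $I$ and their images under $F$ are distinct objects, and $(\hoc F)/_i$ contains mixed chains $j_0\rightarrow\cdots\rightarrow j_a\rightarrow i_0\rightarrow\cdots\rightarrow i_b$ crossing from the $J$-side into the $I$-side; these simplices lie in neither piece of your decomposition. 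The correct statement is that the canonical map from the mapping cone $C_i=\hoc(BI/_i\rightarrow BJ/_{F(i)})$ to $B(\hoc F)/_i$ is a natural \emph{weak equivalence} (Thomason's theorem applied objectwise), not a homeomorphism or a simplicial pushout decomposition you can feed directly into $\hom(-,X_{F(i)})$. The paper sidesteps this by running the argument in the other direction: it computes the homotopy fiber on the nose, as an equalizer of products, and identifies it with $\hom(K,\overline{X})$ for the diagram $K$ with $K_i=C_i$, $K_j=BJ/_j$, $K_\ast=\ast$; only then does it compare $K$ with $B(\hoc F)/_-$ via Thomason's equivalence. (Relatedly, your phrase that $\overline{X}$ is ``constant at the basepoint on the cone-side objects'' is at best loose: $\overline{X}_i=X_{F(i)}$ for $i\in I$, and only $\overline{X}_\ast=\ast$; the components of $\eta$ at $i\in I$ are what carry the null-homotopy.)

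Second, declaring the remaining steps ``formal'' hides the essential homotopy-invariance point: a weak equivalence of diagrams $K\rightarrow B(\hoc F)/_-$ induces a weak equivalence on $\hom(-,\overline{X})$ only if both diagrams are projectively cofibrant. For $B(\hoc F)/_-$ this is standard, but for the cone diagram $K$ it is not automatic, and the paper devotes a separate argument (Lemma~\ref{cofibrancy}) to it: one must produce lifts on the $I$-objects compatible with lifts already chosen over $J$ and $\ast$, which comes down to showing that $K|_{F(-)}\coprod\ast\rightarrow K|_I$ is a cofibration of $I$-diagrams. An equivalent cofibration bookkeeping is needed if you instead try to convert your claimed pushouts of classifying spaces into homotopy pullbacks of mapping spaces. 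Until you supply both the Thomason comparison and this cofibrancy input, your comparison map is defined but not shown to be an equivalence.
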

By the first result, if $X_{F(i)}$ is at least $\dim I/i$-connected the homotopy limit $\holim_IF^{\ast}X$ is connected, and therefore $\holim_{\hoc F}\overline{X}$ is the unique homotopy fiber of $F^\ast$ up to equivalence.

One could try to combine these two results to calculate the connectivity of the restriction map $F^\ast$ in terms of the connectivities of the vertices of $X$. However, this would only show that $F^\ast$ is an isomorphism in homotopy groups up to a range where both the homotopy groups of $\holim_JX$ and $\holim_IF^{\ast}X$ are zero.

\section{Connectivity of homotopy limits}

We prove the following formula, expressing the connectivity of the homotopy limit of a diagram of pointed spaces in terms of the connectivities of the spaces in the diagram.

\begin{theoremA}
Let $X\colon I\rightarrow \Top_\ast$ be a diagram of pointed spaces. If the nerves $NI/_i$ are finite dimensional simplicial sets, the homotopy limit $\holim_I X$ is at least
\[\min_{i\in \Ob I}\big(\conn X_i-\dim NI/_i\big)\]
connected.
\end{theoremA}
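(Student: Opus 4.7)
The plan is to use obstruction theory directly on the Bousfield--Kan model $\holim_I X = \hom_I(BI/_{-}, X)$. The first step is to reduce the $k$-connectivity statement to a path-connectedness statement by looping. Since internal hom commutes with $\Omega$, there is a canonical identification $\Omega^k \holim_I X = \holim_I \Omega^k X$, and the diagram $\Omega^k X$ has $\conn(\Omega^k X_i) = \conn X_i - k$. Writing $c = \min_i(\conn X_i - \dim NI/_i)$, for each $0 \leq k \leq c$ this reduces the theorem to the claim that $\holim_I Y$ is path-connected whenever $Y\colon I\to\Top_\ast$ satisfies $\conn Y_i \geq \dim NI/_i$ for every object $i$.

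The main step is then to show that any natural transformation $\eta\colon BI/_{-} \to Y$ admits a natural homotopy to the constant transformation at the basepoint. I would construct such a homotopy $H\colon BI/_{-} \times [0,1] \to Y$ by skeletal induction, using the following cellular description of $BI/_{-}$ as a diagram $I\to\Top$: it admits a CW-filtration whose $n$-cells are free diagrams of the form $\hom_I(j,-) \times \Delta^n$, one for each non-degenerate $n$-simplex of $NI$ ending at an object $j$, attached along $\hom_I(j,-) \times \partial\Delta^n$. By the enriched Yoneda lemma, a natural transformation from such a free cell into $Y$ is the same as a single map $\Delta^n \to Y_j$, so extending a partial natural transformation across an $n$-cell reduces to an extension problem in the single space $Y_j$.

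At the $n$-th inductive stage, for a cell indexed by a non-degenerate $n$-simplex of $NI$ ending at $j$, the previously constructed data assembles to a map $\partial(\Delta^n \times [0,1]) \cong S^n \to Y_j$ which one must fill to the disk. The resulting obstruction lies in $\pi_n(Y_j)$ and vanishes whenever $n \leq \conn Y_j$. Since any non-degenerate $n$-simplex of $NI$ ending at $j$ gives a non-degenerate $n$-simplex of $NI/_j$ by adjoining $\id_j$, one has $n \leq \dim NI/_j$, and the hypothesis then forces $n \leq \conn Y_j$. All obstructions therefore vanish, and since each $NI/_j$ is finite dimensional the cellular filtration of $BI/_{-}$ terminates at each level, so $H$ is assembled after finitely many stages at every object.

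The main obstacle is maintaining naturality of $H$ as cells are attached across the diagram; this is handled automatically by the free-cell description, where the enriched Yoneda lemma makes each extension problem intrinsic to a single $Y_j$. A minor technical point is that the Bousfield--Kan model represents the true homotopy limit only after replacing $X$ by a levelwise fibrant diagram, but such a replacement can be chosen to preserve the connectivity of $X_i$ at every object, so the hypothesis is unaffected.
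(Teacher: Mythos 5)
Your argument is correct, and its core is the same obstruction-theoretic observation as the paper's: the cells of $BI/_i$ have dimension at most $\dim NI/_i$, so every obstruction lands in a homotopy group of $X_i$ that vanishes by hypothesis. The organization is genuinely different, though. The paper does not loop down to a $\pi_0$-statement; it solves the extension problem $K\wedge S^k\rightarrow K\wedge D^{k+1}\rightarrow X$ directly in $\Top_\ast^I$, inducting over the objects of $I$ filtered by the degree $\deg i=\dim NI/_i$ (a directed Reedy structure) and reducing, at each object, to an extension against the pushout-corner map $L_i(K)\wedge D^{k+1}\cup K_i\wedge S^k\rightarrow K_i\wedge D^{k+1}$ built from the latching space. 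You instead induct over the skeleta of the single diagram $BI/_{-}$, using its free-cell decomposition into pieces $\hom_I(j,-)\times\Delta^n$ and the Yoneda lemma to localize each obstruction in one space $Y_j$. The two inductions process the same cells --- the relative cells of $L_i(K)\rightarrow K_i$ are exactly your free cells generated at $i$ --- merely grouped by object rather than by dimension, and your inequality $n\leq\dim NI/_j\leq\conn Y_j$ is the paper's $n+1\leq\dim K_i+k+1\leq\conn X_i+1$ after looping. What your version buys is transparency: the projective cofibrancy of $BI/_{-}$ and the naturality of the extensions are visible from the free-cell structure, and each obstruction is literally a single map $S^n\rightarrow Y_j$. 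What the paper's version buys is that it never needs the explicit cell structure of $BI/_{-}$, only that $K$ is a cofibrant diagram with $\dim K_i=\dim NI/_i$. Two points you should make explicit when writing this up: the identification $\Omega^k\holim_IX\cong\holim_I\Omega^kX$ should be verified on the Bousfield--Kan model by inspecting adjunctions, and the case $k=0$ must be established first so that path-connectedness makes the higher homotopy groups independent of the choice of basepoint.
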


\begin{example}
Let $\mathcal{P}(n_+)$ be the poset category of subsets of the pointed sets $\underline{n}_+=\{+,1,\dots,n\}$ ordered by inclusion, and $\mathcal{P}_0(n_+)$ the subposet of non-empty subsets. The nerve $N\mathcal{P}_0(n_+)$ is the subdivided $n$-simplex $\Delta^{n}$.
\begin{itemize}
\item The category $\mathcal{P}_0(1_+)$ is the pullback category $\bullet \rightarrow \bullet\leftarrow\bullet$. By theorem A the connectivity of a homotopy pullback $X\rightarrow Y\leftarrow Z$ is at least
\[\min\{\conn X,\conn Z,\conn Y-1\}\]
This can be also easily verified by analyzing the long exact sequence in homotopy groups induced by the homotopy fiber of $X\rightarrow Y$.
\item More generally, Theorem A shows that the homotopy limit of a diagram $X\colon \mathcal{P}_0(n_+)\rightarrow \Top_\ast$ is at least
\[\min_{\emptyset\neq U\subset \underline{n}_+}\conn X_i-|U|+1\]
connected.
\end{itemize}
\end{example}

\begin{proof}
For any pointed map $g\colon S^{k}\rightarrow \hom_\ast(K,X)$, with $k$ smaller than the range of the statement, we need to build an extension of $g$ to the $(k+1)$-disc. By the standard adjunction this is the same as solving the extension problem
\[\xymatrix{K\wedge D^{k+1}\ar@{-->}[r]^-{\widetilde{f}}& X\\
K\wedge S^{k}\ar[u]\ar[ur]_-{\widetilde{g}}
}\]
in the category of diagrams of pointed spaces $Top_{\ast}^I$. We define the extension $\widetilde{f}$ by induction on a filtration of the objects of $I$ induced by the degree function $\deg\colon ObI\rightarrow \mathbb{N}$ defined as the dimension of the over categories
\[\deg i=\dim NI/_i\]
It is easy to see that for any non-identity map $i\rightarrow j$ it satisfies $\deg(i)<\deg(j)$. This is sometimes called a directed Reedy category. For every positive integer $d$, define $I_{\leq d}$ to be the full subcategory of $I$ on objects of degree less than or equal to $d$, and $I_d$ the full subcategory of objects of degree $d$.

For $i$ of degree $-1$, the category $I_{\leq -1}$ is empty and $\widetilde{f}$ is the empty map.
Now suppose that $\widetilde{f}$ is defined as a natural transformation from the category $I_{\leq d-1}$, and let $i$ be an object of $I_d$.
By degree reasons, the only non-identity morphisms of $I_{\leq d}$ involving $i$ are maps $j\rightarrow i$ with $j$ in $I_{\leq d-1}$. In order to be compatible with $I_{\leq d-1}$ and to extend $\widetilde{g}$, the map $\widetilde{f}_{i}$ needs to satisfy the following extension problem in $Top_\ast$
\[\xymatrix{ K_{i}\wedge S^{k}\ar[r]\ar[dr]_{\widetilde{g}_{i}}&K_{i}\wedge D^{k+1}\ar@{-->}[d]^-{\widetilde{f}_{i}}&L_{i}(K)\wedge D^{k+1}\ar[l]\ar[d]^-{\widetilde{f}|_{I_{\leq d-1}}}\\
&X_{i}&L_{i}(X)\ar[l]
}\]
Here $L_i(Z)$ is the $i$-latching space of a diagram $Z\in Top_{\ast}^I$, with verticies $L_i(Z)=\colim\limits_{j\stackrel{\neq\id}{\rightarrow} i}Z_j$.
The right-hand square expresses that $\widetilde{f}_{i}$ needs to be compatible with the extensions previously defined on $I_{\leq d-1}$. Both horizontal maps in the first row are cofibrations since $K$ is cofibrant.
The extension problem above is equivalent to the extension problem
\[\xymatrix{ L_{i}(K)\wedge D^{k+1}\coprod\limits_{L_{i}(K)\wedge S^k}K_{i}\wedge S^k\ar[d]\ar[r]^-{}&X_{i}\\
K_{i}\wedge D^{k+1}\ar@{-->}[ur]_-{\widetilde{f}_{i}}
}\]
and the vertical map is also a cofibration. The extension $\widetilde{f}_{i}$ can be defined inductively on the relative cells of the cofibration, provided that for any $(n+1)$-cell the composition of $\widetilde{g}_{i}$ with the attaching map $S^{n}\rightarrow X_i$ is null-homotopic. If $K_i\wedge D^{k+1}$ has a $(n+1)$-cell, by dimension reasons we must have
\[n+1\leq \dim K_i\wedge D^{k+1}=\dim K_i+k+1\leq \conn X_{i}+1\]
The last inequality holds as $k$ is smaller than the range of the statement. Thus $\pi_n X_i$ is trivial, that is any map $S^{n}\rightarrow X_i$ is null-homotopic.
\end{proof}

\section{A refined cofinality theorem for homotopy limits}\label{cofinality}

Let $X\colon J\rightarrow\Top$ be a diagram of spaces, and $F\colon I\rightarrow J$ a functor. We aim at describing the homotopy fiber of the restriction map 
\[F^\ast\colon \holim_JX\longrightarrow \holim_IF^\ast X\]

\begin{defn}[\cite{thomason}]\label{grot}
The Grothendieck construction of a functor $\Phi\colon K\rightarrow Cat$ is the category $K\wr \Phi$ with objects $\coprod_{\Ob K}\Ob \Phi(k)$
and morphisms
\[\hom_{K\wr \Phi}((k,x),(l,y))=\coprod\limits_{\gamma\in\hom_K(l,k)}\hom_{\Phi(k)}\big(x,\Phi(\gamma)(y)\big)\]
Notice that the orientation of the arrows is reversed from the original definition of \cite{thomason}.
\end{defn}
A functor $F\colon I\rightarrow J$ defines a diagram of categories $\ast\leftarrow I\stackrel{F}{\rightarrow} J$ indexed on the punctured square $\mathcal{P}_0(1_+)=(\bullet\leftarrow\bullet\rightarrow \bullet)$. Its Grothendieck construction is called the cofiber of $F$ and it is denoted
\[\hoc F=\mathcal{P}_0(1_+)\wr(\ast\leftarrow I\stackrel{F}{\rightarrow} J)\]
The category $J$ includes as a full subcategory in $\hoc F$, and there is a sequence of functors
\[I\stackrel{F}{\longrightarrow} J\stackrel{\iota}{\longrightarrow} \hoc F\]
By Thomason's theorem \cite[1.2]{thomason} this is a categorical model for the homotopy cofiber of the nerve of $F$.

\begin{example}
Let $F\colon \mathcal{P}_0(1_+)\rightarrow \mathcal{P}_0(2_+)$ be the inclusion that sends $+$ to $+$, and that adds the element $2$ to the other subsets. The category $\hoc F$ is the poset
\[\xymatrix@1@=6pt{&+\ar[dd]\ar[dddr]\ar[dl]\ar[rr]&&+\ar[dd]\\
+\!\!1\ar[dr]\\
&+\!\!12\ar[rr]&&+\!\!1&&\ast\ar[uull]\ar[ll]\ar[ddll]\\
1\ar[uu]\ar[dr]&&+\!\!2\ar[ul]\\
&12\ar[uu]\ar[rr]&&1\ar[uu]\\
&&2\ar[uu]\ar[ul]
}\]
\end{example}

Let $X\colon J\rightarrow \Top$ be a diagram and let $\ast\rightarrow F^\ast X$ be a natural transformation. Notice that this is the same as the data of a point in the limit of $C$. This induces an extension $\overline{X}\colon\hoc F\rightarrow \Top$ of $X$ to the cofiber of $F$, defined by sending the objects $j\in J$
 to $X_j$, the objects $i\in I$ to $X_{F(i)}$ and the object $\ast$ to the one point space. On morphisms, it sends the maps $\ast\rightarrow i$ to the base point $\ast\rightarrow X_{F(i)}$, and the maps in $\hom_{\hoc F}(j,i)=\hom_J(j,F(i))$ to $X_j\rightarrow X_{F(i)}$. Altogether, we have a commutative diagram of functors
\[\xymatrix{I\ar[r]^F\ar[dr]_{F^\ast X}&J\ar[r]^-{\iota}\ar[d]^X&\hoc F\ar[dl]^{\overline{X}}\\
& \Top
}\] 

\begin{rem}
Given maps of spaces $Y\stackrel{f}{\rightarrow}Z\stackrel{g}{\rightarrow}T$, an extension of $g$ to the homotopy cofiber of $f$ is precisely a null-homotopy of $g\circ f$. The analogue of this null-homotopy for the functor case is a natural transformation $\ast\rightarrow F^\ast X$.
\end{rem}

We thank an anonymous referee for pointing out that the following theorem is a direct consequence of the Fubini's theorem for homotopy limit of \cite[31.5]{voj}. Theorem B below is the special case of a pushout diagram.

\begin{theoremB}[{cf.\cite[31.5]{voj}}]
Let $F\colon I\rightarrow J$ be a functor, $X\colon J\rightarrow\Top$ a diagram of spaces and $\ast\rightarrow F^{\ast}X$ a natural transformation. There is a homotopy cartesian square of spaces
\[\xymatrix{\displaystyle\holim_{\hoc F}\overline{X}\ar[r]^{\iota^\ast}\ar[d]&\displaystyle\holim_JX\ar[d]^{F^{\ast}}\\
\ast\ar[r]&\displaystyle\holim_IF^{\ast}X
}\]
where the bottom horizontal map is the constant natural transformation $BI/_{-}\rightarrow\ast\rightarrow F^{\ast}X$. 
\end{theoremB}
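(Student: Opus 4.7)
The plan is to exhibit $\holim_{\hoc F}\overline{X}$ directly as a path-space model of the homotopy pullback of $\ast\to \holim_I F^{\ast}X\gets\holim_J X$, by unpacking the Bousfield-Kan formula $\hom(B(\hoc F)/_-,\overline{X})$ over the over categories of $\hoc F$, so that the general Fubini statement of \cite{voj} is never invoked.

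First I would compute the over categories of $\hoc F$. Trivially $(\hoc F)/_\ast$ is terminal, and since no morphism into a $J$-object of $\hoc F$ originates outside of $J$, one has $(\hoc F)/_j=J/_j$ for $j\in J$. For $i\in I$, unfolding Definition \ref{grot} shows that $(\hoc F)/_i$ is itself a Grothendieck construction, canonically isomorphic to $\hoc(F_i)$, where $F_i\colon I/_i\to J/_{F(i)}$ is the functor $(i'\to i)\mapsto (F(i')\to F(i))$ induced by $F$ on the over categories. By Thomason's theorem the classifying space $B(\hoc F)/_i$ is then a homotopy pushout of $\ast\gets BI/_i\xrightarrow{BF_i} BJ/_{F(i)}$, with $BI/_i$, $BJ/_{F(i)}$, and $\{\ast\}$ all appearing as honest subcomplexes.

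Next I would unpack a natural transformation $\tau\in\hom(B(\hoc F)/_-,\overline{X})$. Its $J$-components $(\tau_j)_{j\in J}$ form a point $\iota^{\ast}\tau\in\holim_J X$, and the component at $\ast$ is trivial. For each $i\in I$ the component $\tau_i\colon B(\hoc F)/_i\to X_{F(i)}$ is forced, by naturality along the canonical arrow $F(i)\to i$ in $\hoc F$, to restrict to $\tau_{F(i)}$ on $BJ/_{F(i)}$, and by naturality along $\ast\to i$ to send $\{\ast\}$ to the basepoint. The remaining information in $\tau_i$ is, via the pushout description of $B(\hoc F)/_i$, equivalent to the data of a path $\gamma_i$ in $\hom(BI/_i,X_{F(i)})$ joining $\tau_{F(i)}\circ BF_i=(F^{\ast}\iota^{\ast}\tau)_i$ to the constant map at the basepoint. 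Naturality in $i\in I$ assembles the $\gamma_i$ into a single path $\gamma$ in $\holim_I F^{\ast}X$ from $F^{\ast}\iota^{\ast}\tau$ to the constant natural transformation $BI/_-\to\ast\to F^{\ast}X$.

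The assignment $\tau\mapsto(\iota^{\ast}\tau,\gamma)$ then defines a natural map from $\holim_{\hoc F}\overline{X}$ to the path-space model $\holim_J X\times^h_{\holim_I F^{\ast}X}\ast$ of the homotopy pullback in the statement, and I would exhibit an inverse by reconstructing $\tau_i$ from $(\tau_{F(i)},\gamma_i)$ through the pushout structure of $B(\hoc F)/_i$ and verify that the two assignments are mutually inverse up to canonical deformations, thereby exhibiting the square as homotopy Cartesian. The delicate step is the second paragraph: the simplicial set $N(\hoc F)/_i$ is not literally the strict pushout $NJ/_{F(i)}\cup_{NI/_i} C(NI/_i)$, it is only Thomason-equivalent to it, so one must either route through an explicit cofibrant replacement of the indexing pushout diagram or construct a natural filtration of $N\hoc(F_i)$ by the $I$-subnerve, the $J$-subnerve, and the mixed simplices connecting them, and check that the resulting pushout decomposition is rigid and functorial enough in $i$ to produce the path $\gamma$ coherently in $\holim_I F^{\ast}X$.
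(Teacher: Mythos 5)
Your ingredients are the paper's: the over-category computation $(\hoc F)/_\ast=\ast$, $(\hoc F)/_j=J/_j$, $(\hoc F)/_i\cong\hoc(I/_i\to J/_{F(i)})$, Thomason's theorem, and the matching of the $I$-components of a natural transformation against paths in $\holim_I F^{\ast}X$. But the ``delicate step'' you postpone to your last sentence is the entire content of the proof, and of the two ways you propose to handle it only the first can work. The simplicial set $N(\hoc F)/_i$ contains nondegenerate mixed simplices $j_0\to\cdots\to j_k\to i_0\to\cdots\to i_l$ recording chains that pass from $J$ into $I$ and then continue inside $I$, so it is not a strict pushout of $\ast\leftarrow NI/_i\to NJ/_{F(i)}$ along a common subcomplex, and no filtration exhibits it as one. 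Consequently $\tau_i$ does not literally decompose as a pair $(\tau_{F(i)},\gamma_i)$, and reconstructing $\tau_i$ from such a pair would require a map $B(\hoc F)/_i\to\hoc(BI/_i\to BJ/_{F(i)})$ inverting Thomason's equivalence, which exists only up to homotopy. The ``mutually inverse up to canonical deformations'' step is therefore not something you can verify directly; it has to be replaced by a model-categorical argument.

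The paper's resolution is your first alternative made precise, and it removes the need for any inverse. It introduces the auxiliary diagram $K\colon\hoc F\to\Top$ with $K_j=BJ/_j$, $K_\ast=\ast$ and $K_i=C_i$ the \emph{strict} mapping cone $\hoc(BI/_i\to BJ/_{F(i)})$, and identifies $\hom(K,\overline{X})$ \emph{homeomorphically} with the path-space fiber of $F^{\ast}$ by writing that fiber as an equalizer of homotopy fibers of products and inspecting the compatibility conditions --- this is exactly your second and third paragraphs, carried out where the pushout description is literally true. Thomason's theorem then supplies a single weak equivalence of diagrams $K\to B(\hoc F)/_{-}$; since both diagrams are projectively cofibrant (for $K$ this is Lemma \ref{cofibrancy}, whose proof exhibits $K|_{F(-)}\coprod\ast\to K|_I$ as a cofibration of $I$-diagrams), the functor $\hom(-,\overline{X})$ carries it to the desired weak equivalence $\holim_{\hoc F}\overline{X}\to\hom(K,\overline{X})$. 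Reorganized this way, your argument becomes the paper's proof; as written, the comparison between $B(\hoc F)/_i$ and the strict cone --- the one genuinely nontrivial point --- is left open.
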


\begin{proof}
Since homotopy limits commute with limits, the homotopy fiber of $F^{\ast}$ over the constant natural transformation is homeomorphic to the equalizer
\[\lim\Big(\hofib\left(
\vcenter{\hbox{\xymatrix{\displaystyle\!\!\prod_{j\in \Ob J}\map(BJ/_j,X_j)\ar[d]\\
\displaystyle\!\!\prod_{i\in \Ob I}\map(BI/_i,X_{F(i)})}}}
\!\!\right)
\rightrightarrows \hofib\left(
\vcenter{\hbox{\xymatrix{\displaystyle\!\!\!\prod_{(\beta\colon j'\rightarrow j)\in \hom J}\map(BJ/_{j'},X_j)\ar[d]\\
\displaystyle\!\!\!\prod_{(\alpha\colon i'\rightarrow i)\in \hom I}\map(BI/_{i'}, X_{F(i)})}}}
\!\!\right)\Big)\]
where the homotopy fibers are taken over the base points of $X_{F(i)}$.
The left hand homotopy fiber is homeomorphic to
\[\prod_{i\in I}\hofib\Big(\map(BJ/_{F(i)},X_{F(i)})\stackrel{F^\ast}{\rightarrow} \map(BI/_i,X_{F(i)})\Big)\times \prod_{j\in Ob J\backslash F(ObI)}\!\!\!\!\!\!\map(BJ/_j,X_{j})\]
which is homeomorphic to
\[\prod_{i\in I}\map_\ast\big(\hoc(BI/_i\rightarrow BJ/_{F(i)}),X_{F(i)}\big)\times \prod_{j\in Ob J\backslash F(ObI)}\!\!\!\!\!\!\map(BJ/_j,X_{j})\]
where $\hoc$ denotes the cofiber and $\map_\ast$ is the space of pointed maps. Similarly the right hand homotopy fiber is homeomorphic to
\[\prod_{(\alpha\colon i'\rightarrow i)\in\hom_I}\!\!\!\! \map_\ast\big(\hoc(BI/_i\rightarrow BJ/_{F(i)}),X_{F(i)}\big)\times \!\!\!\!\!\prod_{(\beta\colon j'\rightarrow j)\in \hom_J\backslash F(\hom_I)}\!\!\!\!\!\!\!\!\!\!\!\!\!\!\map(BJ/_j,X_{j})\]
Let us denote $C_i=\hoc(BI/_i\rightarrow BJ/_{F(i)})$. By inspection, the equalizer above is the space of collections of pointed maps $\{f_i\colon C_i\rightarrow X_{F(i)}\}_{i\in Ob I}$ and maps $\{g_i\colon BJ/_j\rightarrow X_j\}_{i\in Ob J}$ subject to the commutativity of the following diagrams
\begin{multicols}{2}
\begin{enumerate}
\item For all $\alpha\colon i'\rightarrow i$
\[\xymatrix{C_{i'}\ar[r]^{f_{i'}}\ar[d]_{\alpha_\ast} &X_{F(i')}\ar[d]^{X(F(\alpha))}\\
C_{i}\ar[r]^{f_{i}}& X_{F(i)}
}\]
\phantom{a}
\item For all $\beta\colon j'\rightarrow j$ with $j,j'\notin F(\Ob I)$
\[\xymatrix{BJ/_{j'}\ar[r]^{g_{j'}}\ar[d]_{\beta_\ast} &X_{j'}\ar[d]^{X(\beta)}\\
BJ/_{j}\ar[r]^{g_{j}}& X_{j}
}\]
\item For all $\beta\colon j'\rightarrow j$ with $j'\!\notin\! F(\Ob I)$ and for all $i\in F^{-1}(j)$
\[\xymatrix{BJ/_{j'}\ar[rr]^-{g_{j'}}\ar[d]_{\beta_\ast}&&X_{j'}\ar[d]^{X(\beta)}\\
BJ/_{j}\ar[r]& C_i\ar[r]_-{f_i}& X_{j}
}\]
\item For all $\beta\colon j'\rightarrow j$ with $j\notin F(\Ob I)$ and for all $i'\in F^{-1}(j')$
\[\xymatrix{BJ/_{j'}\ar[d]_{\beta_\ast}\ar[r]&C_{i'}\ar[r]^-{f_{i'}}&X_{j'}\ar[d]^{X(\beta)}\\
BJ/_{j}\ar[rr]_-{g_j}&& X_{j}
}\]
\end{enumerate}
\end{multicols}
5. $\hbox{For all $\beta\colon j'\rightarrow j$ and for all $i\in F^{-1}(j)$ and $i'\in F^{-1}(j')$}$
\[\xymatrix{BJ/_{j'}\ar[d]_{\beta_\ast}\ar[r]&C_{i'}\ar[r]^-{f_{i'}}&X_{j'}\ar[d]^{X(\beta)}\\
BJ/_{j}\ar[r]&C_i\ar[r]_-{f_i}& X_{j}
}\]

This is exactly the space of natural transformations $\hom(K,\overline{X})$, for the diagram of spaces $K\colon \hoc F\rightarrow \Top$ defined at objects $j$ of $J$ by $K_j=BJ/_j$, at objects $i$ of $I$ by $K_i=C_i$ and at the point $\ast$ by the one point space. A morphisms $\gamma$ of $\hom_{\hoc F}(j,i)=\hom_J(j, F(i))$ is sent to the composite 
\[BJ/j\stackrel{\gamma_\ast}{\longrightarrow} BJ/_{F(i)}\longrightarrow C_i\] 
where the second map is the canonical inclusion of the cone. For the objects of $\hoc F$ coming from $J$ and for the base point, it is clear that $(\hoc F)/_j=J/_j$ and $(\hoc F)/_\ast=\ast$. Moreover at the objects of $i$, there are natural maps
\[C_i\cong\hoc(BI/_i\stackrel{BF}{\rightarrow}BJ/_{F(i)})\stackrel{\simeq}{\longrightarrow} B\hoc(I/_i\stackrel{F}{\rightarrow}J/_{F(i)})\cong B(\hoc F)/_i
\]
where the second map is the natural equivalence of Thomason's theorem \cite[1.2]{thomason}.
This gives an equivalence of diagrams of spaces $K\stackrel{\simeq}{\rightarrow}B(\hoc F)/_{-}$. Both diagrams are cofibrant in the projective model structure, the target because it is a diagram of over categories (see \cite[14.8.9]{Hirsch}) and the source by \ref{cofibrancy} below. As $\hom(-,\overline{X})$ preserves equivalences of cofibrant diagrams, we get an equivalence
\[\holim_{\hoc F}\overline{X}=\hom(B(\hoc F)/_{-},\overline{X})\stackrel{\simeq}{\longrightarrow}\hom(K,\overline{X})\]
with the target homeomorphic to the homotopy fiber of the restriction map.
\end{proof}

\begin{lemma}\label{cofibrancy}
The diagram $K\colon \hoc F\rightarrow \Top$ from the proof of Theorem B is cofibrant in the projective model structure of the diagram category $\Top^{\hoc F}$.
\end{lemma}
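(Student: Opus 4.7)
The plan is to realize $K$ as the pushout of projectively cofibrant diagrams along a projective cofibration in $\Top^{\hoc F}$. Since projective cofibrations are closed under pushout, this will give the conclusion. Write $\iota_\ast, \iota_I, \iota_J$ for the inclusions of the full subcategories $\{\ast\}, I, J \subset \hoc F$. Since restriction along any functor is right Quillen for the projective model structures, each left Kan extension $(\iota_?)_!$ is left Quillen and therefore preserves cofibrations and cofibrant objects. Moreover, by inspection of the hom-sets in the Grothendieck construction $\hoc F$, there are no morphisms of type $I\to J$, $I\to\ast$, $J\to\ast$, or $\ast\to J$; the only morphisms out of the basepoint are $\hom_{\hoc F}(\ast, i)=\{\ast\}$ for $i\in I$.

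Set $K^{(0)} = (\iota_\ast)_!(\ast)$; by the hom-set computation this evaluates to a point on $\{\ast\}\cup I$ and to $\emptyset$ on $J$. Set $K^{(1)} = (\iota_J)_!(BJ/_{-})$; using that $\iota_J$ is fully faithful and that $\iota_J/i \cong J/_{F(i)}$, this evaluates to $BJ/_j$ at $j\in J$, to $BJ/_{F(i)}$ at $i\in I$, and to $\emptyset$ at $\ast$. Both are projectively cofibrant, using \cite[14.8.9]{Hirsch} for the cofibrancy of $BJ/_{-}$ in $\Top^J$. The same reference gives cofibrancy of $BI/_{-}$ in $\Top^I$, so the topological tensor $BI/_{-}\times\partial[0,1] \hookrightarrow BI/_{-}\times[0,1]$ is a projective cofibration in $\Top^I$, which is preserved by $(\iota_I)_!$.

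One then exhibits $K$ as the pushout
\[
\xymatrix{
(\iota_I)_!(BI/_{-} \times \partial[0,1]) \ar[r]^-{\varphi} \ar@{^{(}->}[d] & K^{(0)} \sqcup K^{(1)} \ar[d] \\
(\iota_I)_!(BI/_{-} \times [0,1]) \ar[r] & K
}
\]
where $\varphi$ collapses $BI/_i\times\{0\}$ onto the cone-point of $K^{(0)}(i)=\ast$ and sends $BI/_i\times\{1\}$ into $K^{(1)}(i)=BJ/_{F(i)}$ via $BF_i$. The verification is objectwise: at $j\in J$ the pushout reduces to $BJ/_j = K_j$; at $\ast$ to $\ast = K_\ast$; and at $i\in I$ it evaluates to $(\ast \sqcup BJ/_{F(i)}) \amalg_{BI/_i \sqcup BI/_i} (BI/_i\times[0,1])$, which is precisely the standard construction of the mapping cone $C_i = K_i$.

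The main point requiring care is the pushout identification at $i\in I$, namely recognizing the iterated pushout as the mapping cone $C_i$, together with the explicit computation of the relevant Kan extensions from the hom-sets of $\hoc F$. These verifications are essentially bookkeeping, and the desired cofibrancy of $K$ then follows because projective cofibrations are preserved under pushouts along arbitrary maps out of a cofibrant object.
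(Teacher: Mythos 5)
Your proof is correct, and it is essentially the paper's argument turned inside out. The paper establishes cofibrancy by solving the lifting problem against an acyclic fibration directly: it first lifts over the full subcategories $J$ and $\ast$ (using Hirschhorn's cofibrancy of $NJ/_{-}$), then observes that every morphism $j\rightarrow i$ in $\hoc F$ factors through the canonical map $F(i)\rightarrow i$, so that the remaining extension to $I$ is a lifting problem in $\Top^{I}$ against the map $K|_{F(-)}\amalg\ast\rightarrow K|_{I}$, which it shows is a cofibration by exhibiting it as a pushout of the cylinder inclusion $NI/_{-}\amalg NI/_{-}\rightarrow NI/_{-}\times[0,1]$. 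You instead exhibit $K$ itself as a pushout of projectively cofibrant diagrams (the left Kan extensions $(\iota_\ast)_!(\ast)$ and $(\iota_J)_!(BJ/_{-})$) along the left Kan extension of that very same cylinder cofibration, and conclude by closure of cofibrations under pushout and the left Quillen property of $\iota_!$. The two proofs rest on identical inputs -- the cofibrancy of the over-category diagrams and the cylinder presentation of the mapping cone $C_i$ -- and your computation of $\iota_J/_i\cong J/_{F(i)}$ plays exactly the role of the paper's factorization remark about maps $j\rightarrow i$. What your packaging buys is that the adjunction bookkeeping is done once and for all by the Kan extension formalism, and $K$ acquires an explicit cell presentation; what the paper's version buys is that it never needs to verify that the comparison map $\varphi$ is a map of $\hoc F$-diagrams or that the objectwise pushouts assemble to $K$ with the correct structure maps, which is the one point in your write-up that deserves the explicit adjunction argument (a map out of $(\iota_I)_!(BI/_{-}\times\partial[0,1])$ is a natural map out of $BI/_{-}\times\partial[0,1]$ in $\Top^{I}$, given here by the fold of the constant map and $BF$).
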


\begin{proof}
We need to solve the lifting problem in the category $\Top^{\hoc F}$
\[\xymatrix{&E\ar@{->>}[d]^{\sim}\\
K\ar[r]\ar@{-->}[ur]^-{l}&B
}\]
where the vertical map is an acyclic fibration. The restriction of $K$ to $J$ is by definition $K|_J=NJ/_{-}$, which is cofibrant in $\Top^J$ (see \cite[14.8.9]{Hirsch}). Therefore we can choose lifts $l|_J$ and $l_{\ast}$ respectively on the subcategories $J$ and $\ast$ of $\hoc F$
\[\xymatrix{&E|_J\ar@{->>}[d]^{\sim}\\
K|_J\ar[r]\ar[ur]^-{l_J}&B|_J
} \ \ \ \ \ \ \ \ \ \ \ \ \ \ \ \xymatrix{&E_\ast\ar@{->>}[d]^{\sim}\\
K_\ast=\ast\ar[r]\ar[ur]^-{\ast}&B_\ast
}\]
where the right hand diagram is a diagram of simplicial sets. We are left with defining the lift $l$ on $I$, compatible with the maps $j\rightarrow i$ and $\ast\rightarrow i$ in $\hoc F$. Notice that every map $j\rightarrow i$ factors through the canonical map $F(i)\rightarrow i$ in $\hoc F$ defined by the identity on $F(i)$. Therefore a compatible lift $l_I$ is given by the lifting problem in $\Top^I$
\[\xymatrix{K_{F(-)}\coprod\ast\ar[d]\ar[rr]^-{(l_J)|_{F(-)}\coprod l_\ast}&&E|_{F(-)}\coprod E_\ast\ar[r]&E|_I\ar@{->>}[d]^{\sim}\\
K|_I\ar[rrr]\ar@{-->}[urrr]_-{l_I}&&&B|_I
}\]
where the top triangle expresses compatibility with $l_J$ and $l_\ast$. This lifting problem has a solution as long as the left vertical map is a cofibration in $\Top^I$. This is the right vertical map in the diagram
\[\xymatrix{NI/_{-}\coprod NI/_{-}\ar[r]\ar[d]&NI/_{-}\coprod \ast\ar[r]\ar[d]&NJ/_{F(-)}\coprod\ast=K|_{F(-)}\coprod\ast\ar@<4ex>[d]\\
NI/_{-}\times [0,1]\ar[r]&CNI/_{-}\ar[r]&C_{(-)}=K|_I
}\]
The two small squares are pushouts, and the left vertical map is the inclusion of the top and the bottom of the cylinder, which is a cofibration. Therefore so is $K|_{F(-)}\coprod\ast\rightarrow K|_I$.
\end{proof}

\bibliographystyle{amsalpha}
\bibliography{holims}

\providecommand{\bysame}{\leavevmode\hbox to3em{\hrulefill}\thinspace}
\providecommand{\MR}{\relax\ifhmode\unskip\space\fi MR }
\providecommand{\MRhref}[2]{%
  \href{http://www.ams.org/mathscinet-getitem?mr=#1}{#2}
}
\providecommand{\href}[2]{#2}
\begin{thebibliography}{Tho79}

\bibitem[BK72]{BK}
A.~K. Bousfield and D.~M. Kan, \emph{Homotopy limits, completions and
  localizations}, Lecture Notes in Mathematics, Vol. 304, Springer-Verlag,
  Berlin, 1972. \MR{0365573 (51 \#1825)}

\bibitem[CS02]{voj}
Wojciech Chach{\'o}lski and J{\'e}r{\^o}me Scherer, \emph{Homotopy theory of
  diagrams}, Mem. Amer. Math. Soc. \textbf{155} (2002), no.~736, x+90.
  \MR{1879153 (2002k:55026)}

\bibitem[Hir03]{Hirsch}
Philip~S. Hirschhorn, \emph{Model categories and their localizations},
  Mathematical Surveys and Monographs, vol.~99, American Mathematical Society,
  Providence, RI, 2003. \MR{1944041 (2003j:18018)}

\bibitem[Tho79]{thomason}
R.~W. Thomason, \emph{Homotopy colimits in the category of small categories},
  Math. Proc. Cambridge Philos. Soc. \textbf{85} (1979), no.~1, 91--109.
  \MR{510404 (80b:18015)}

\end{thebibliography}

\end{document}